\newcommand{\ud}{\,\mathrm{d}}
\newcommand{\R}{\mathbb{R}}
\newcommand{\la}{\left\langle}
\newcommand{\ra}{\right\rangle}
\newcommand{\longueur}{\textup{length}}
\newtheorem{theoreme}{Théorème}[section]
\newtheorem{lemme}[theoreme]{Lemma}
\newtheorem{prop}[theoreme]{Proposition}
\newtheorem{prop-def}[theoreme]{Definition-proposition}
\theoremstyle{definition}
\theoremstyle{remark}
\newtheorem{exs}[theoreme]{Examples}
\newtheorem{rmq}[theoreme]{Remark}
\title{Improvement and generalisation of Papasoglu's lemma}
\author{\textsc{Simon Allais}}
\begin{document}

\maketitle
\begin{abstract}
We improve an isoperimetric inequality due to Panos Papasoglu. We also generalize this inequality to the Finsler case by proving an optimal Finsler version of the Besicovitch's lemma which holds for any notion of Finsler volume.
\end{abstract}

\section{Introduction}
In \cite{Pa} (proposition 2.3), Panos Papasoglu shows the
\begin{lemme}
Let $(\mathbb{S}^{2},g)$ be a Riemannian two-sphere and denote by $\mathcal{A}$ its Riemannian area. Then for any $\varepsilon>0$ there exists a closed curve $\gamma$ dividing $(\mathbb{S}^{2},g)$ into two disks $D_{1}$ and $D_{2}$ of area at least $\frac{\mathcal{A}(\mathbb{S}^{2},g)}{4}$ and whose length satisfies
\begin{equation*}
\longueur (\gamma ) \leq 2\sqrt{3\mathcal{A}(\mathbb{S}^{2},g)} + \varepsilon.
\end{equation*}
\end{lemme}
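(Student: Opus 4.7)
The plan is a sweepout argument driven by the coarea formula. Fix $\varepsilon > 0$. After a $C^\infty$-small perturbation of $g$, losses absorbed into $\varepsilon$, I may assume that for a well-chosen basepoint $p \in \mathbb{S}^2$ the function $f := d_g(p,\cdot)$ is Morse off $p$, so that for almost every $t$ the level set $f^{-1}(t)$ is a finite disjoint union of smooth embedded circles, each of which separates $\mathbb{S}^2$ into two topological disks. Set $A(t) := \mathcal{A}(\{f \leq t\})$, a continuous non-decreasing function from $0$ to $\mathcal{A} := \mathcal{A}(\mathbb{S}^2,g)$, and $L(t) := \longueur(f^{-1}(t))$, so that the coarea formula gives $A'(t) = L(t)$ almost everywhere. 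Introduce the \emph{median interval} $[t_-,t_+]$ via $t_- := \inf\{t : A(t)\geq \mathcal{A}/4\}$ and $t_+ := \sup\{t : A(t)\leq 3\mathcal{A}/4\}$, so every $t\in [t_-,t_+]$ is a candidate level producing a curve with the required area split.

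Integrating $L$ on the median interval yields
\[\int_{t_-}^{t_+} L(t)\,\ud t \;\leq\; A(t_+) - A(t_-) \;\leq\; \mathcal{A}/2,\]
so there exists $t^* \in [t_-,t_+]$ with $L(t^*) \leq \mathcal{A}/(2(t_+-t_-))$. In the \emph{thick regime} $t_+ - t_- \geq \sqrt{\mathcal{A}/48}$ this already gives $L(t^*) \leq 2\sqrt{3\mathcal{A}}$, and a Reeb-tree pigeonhole then extracts a single component $\gamma$ of $f^{-1}(t^*)$ whose two bounded disks both have area at least $\mathcal{A}/4$: the components of $f^{-1}(t^*)$ are in bijection with the edges of the Reeb tree of $f$ cut at height $t^*$, the area mass on one side of an edge varies continuously along the tree from $0$ to $\mathcal{A}$, and an intermediate value argument locates an edge realising a balanced bisection. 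This $\gamma$ has length at most $L(t^*) \leq 2\sqrt{3\mathcal{A}}$ and satisfies the conclusion.

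The \emph{thin-shell regime} $t_+ - t_- < \sqrt{\mathcal{A}/48}$ is the main obstacle. In this case the annulus $\{t_- \leq f \leq t_+\}$ is radially narrow yet carries area at least $\mathcal{A}/2$, so $L(t)$ may be arbitrarily large on the median interval and no single level set yields a short bisector. Here I would abandon pure level sets and build the dividing curve by concatenation: take a suitable component of $f^{-1}(t^*)$ enclosing a disk of area just below $\mathcal{A}/4$, and augment it by two arcs transverse to the thin annulus, each of length less than $t_+-t_- < \sqrt{\mathcal{A}/48}$, plus a controlled arc along a level inside the annulus, chosen to swallow exactly the extra area needed to land in $[\mathcal{A}/4,3\mathcal{A}/4]$. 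The delicate points — performing the gluing so that the result is an \emph{embedded} simple closed curve, and controlling the length of the angular arc in a narrow region of large area — are precisely where a Besicovitch-type inequality for thin Riemannian rectangles intervenes; unsurprisingly, the abstract advertises such a Besicovitch lemma as the backbone of the generalisation. Balancing the two regimes should give exactly the constant $2\sqrt{3\mathcal{A}}$, with the $\varepsilon$ absorbing the initial metric perturbation.
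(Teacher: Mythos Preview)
Your approach --- a direct construction via a distance-function sweepout and a thick/thin dichotomy --- is entirely different from Papasoglu's argument (which the paper reproduces in its Finsler form). There one does \emph{not} build the dividing curve; one argues by near-minimality. Set $L := \inf_{\gamma\in\Gamma}\longueur(\gamma)$ over the family $\Gamma$ of simple closed curves splitting $\mathbb{S}^2$ into two disks of area $\geq \mathcal{A}/4$, and pick $\gamma\in\Gamma$ with $\longueur(\gamma) < L+\varepsilon$, bounding $D_1, D_2$ with $\mathcal{A}(D_1)\geq\mathcal{A}(D_2)$. The key observation is that $D_1$ admits no $\varepsilon$-shortcut: any arc $\delta\subset D_1$ joining two points of $\gamma$ is at most $\varepsilon$ shorter than the shorter of the two sub-arcs of $\gamma$ it cuts off, for otherwise one of the two resulting loops would be a strictly shorter element of $\Gamma$. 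Cutting $\gamma$ into four consecutive arcs of length $\longueur(\gamma)/4$, the no-shortcut property forces the $D_1$-distance between opposite arcs to be at least $\longueur(\gamma)/4-\varepsilon$; Besicovitch's lemma applied to the rectangle $D_1$ gives $\mathcal{A}(D_1)\geq(\longueur(\gamma)/4-\varepsilon)^2$, and combining with $\mathcal{A}(D_1)\leq 3\mathcal{A}/4$ yields the bound. No sweepout, no regime split.

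Your argument has a genuine gap already in the thick regime. You correctly obtain $t^*\in[t_-,t_+]$ with total level-set length $L(t^*)\leq 2\sqrt{3\mathcal{A}}$, but the extraction of a \emph{single} component of $f^{-1}(t^*)$ realising the $\mathcal{A}/4$-split can fail. The components $\gamma_1,\ldots,\gamma_k$ of $f^{-1}(t^*)$ bound pairwise disjoint disks $D_i^-\subset\{f>t^*\}$ (the sides not containing $p$, since the sublevel set is connected), and you need some $\mathcal{A}(D_i^-)\geq\mathcal{A}/4$; yet while their \emph{sum} lies in $[\mathcal{A}/4,3\mathcal{A}/4]$, each individual area may be tiny --- picture $p$ at the base of a sphere with many long thin fingers, so that at level $t^*$ each finger contributes one short circle bounding a small cap. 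Your Reeb-tree intermediate-value step does locate a balanced circle \emph{somewhere} on the tree, but not in general at height $t^*$, so the length bound $L(t^*)$ does not transfer to it. The thin-shell regime is, as you essentially concede, only a programme: the ``controlled angular arc'' inside a narrow annulus of large area is precisely the uncontrolled quantity, and invoking a Besicovitch-type inequality by name does not supply the missing estimate or guarantee embeddedness of the concatenated curve.
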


This lemma has several deep consequences in metric geometry: using it, P. Papasoglu gives estimates of the Cheeger constant of surfaces, Y. Liokumovich, A. Nabutovsky and R. Rotman use it to answer a question asked by S. Frankel, M. Katz and M. Gromov in \cite{Lio1} whereas F. Balacheff uses it to estimate 2-spheres width in \cite{Bal2}. In this article, we give two different ways to improve Papasoglu estimate. First by a $\sqrt{2}$ factor by using directly the coarea formula instead of the Besicovitch lemma. Then by a $2\sqrt{\frac{2}{\pi}}$ factor by using an argument suggested by an anonymous reviewer and already used by Gromov to give the filling radius of $\mathbb{S}^{1}$ in the simply connected case: Pu's inequality. It gives automatically better estimates: for instance, in \cite{Lio2}, the constants 52 and 26, given by Y. Liokumovich in the abstract, could be divided by $2\sqrt{\frac{2}{\pi}}$, thus \emph{there exists a Morse function $f: M \rightarrow \R$, which is constant on each connected component of a Riemannian 2-sphere with $k\geq 0$ holes $M$ and has fibers of length no more than $26\sqrt{\frac{\pi}{2}\mathcal{A}(M)}$} and \emph{on every 2-sphere there exists a simple closed curve of length $\leq 13\sqrt{\frac{\pi}{2}\mathcal{A}(\mathbb{S}^{2})}$ subdividing the sphere into two discs of area $\geq \frac{1}{3}\mathcal{A}(\mathbb{S}^{2})$}.

The Besicovitch's lemma asserts that, given a parallelotope $P\subset \R^{n}$ endowed with a Riemannian metric $g$ then
\begin{equation*}
v(P,g) \geq \prod_{i=1}^{n}d_{i}
\end{equation*}
where $v$ denotes the Riemannian volume of $(P,g)$ and the $d_{i}$ denote the Riemannian distances between two opposite sides of $P$ (see for instance \cite{Gr} section 4.28). It was used by P. Papasoglu in the proof of his lemma. In this article, we give a natural generalisation of Besicovitch's lemma extending it to Finsler parallelotopes -- that is parallelotopes continously endowed with a norm at each of their points. As for such a manifold, there aren't one good definition of volume, we prove an optimal inequality satisfied by any Finsler volume in the sense of \cite{BBI} (paragraph 5.5.3) such as Busemann-Hausdorff and Holmes-Thompson ones. Our proof is based on the Gromov one given in \cite{Gr}. We then use it in order to extend the Papasoglu lemma to Finsler 2-spheres although the Holmes-Thompson and Busemann-Hausdorff cases could still be improved.

\section{Improvements of Papasoglu's lemma}

The Riemannian case of Papasoglu isoperimetric inequality could be improved by using directly the coarea formula instead of Besicovitch lemma:
\begin{prop} \label{papRiem}
Let $(\mathbb{S}^{2},g)$ be a Riemannian two-sphere and denote by $\mathcal{A}$ its Riemannian area. Then for any $\varepsilon>0$ there exists a closed curve $\gamma$ dividing $(\mathbb{S}^{2},g)$ into two disks $D_{1}$ and $D_{2}$ of area at least $\frac{\mathcal{A}(\mathbb{S}^{2},g)}{4}$ and whose length satisfies
\begin{equation*}
\longueur (\gamma ) \leq \sqrt{6\mathcal{A}(\mathbb{S}^{2},g)} + \varepsilon.
\end{equation*}
\end{prop}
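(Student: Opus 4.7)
The plan is to follow Papasoglu's proof of the original lemma verbatim, except at the step where he applies Besicovitch's inequality (of the form \emph{width $\cdot$ width $\leq$ area}) to deduce a length bound: there, the coarea formula (of the form \emph{average fiber length $\cdot$ width $=$ area}) is used directly. Since Besicovitch controls a worst-case product while coarea controls a precise mean, the substitution gives a $\sqrt{2}$ gain in the final constant, transforming Papasoglu's $2\sqrt{3\mathcal{A}}$ into $\sqrt{6\mathcal{A}}$.

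Concretely, fix $\varepsilon>0$ and let $\gamma_{0}$ be a simple closed curve that, up to $\varepsilon$, minimises length among simple dividing curves whose complementary disks both have area at least $\mathcal{A}/4$. Write $\ell=\longueur(\gamma_{0})$, label the complementary disks $D_{1},D_{2}$ with $|D_{1}|\leq|D_{2}|$, and introduce the signed distance function to $\gamma_{0}$:
\begin{equation*}
f(x)=\begin{cases}+d(x,\gamma_{0}),&x\in D_{2},\\-d(x,\gamma_{0}),&x\in D_{1},\end{cases}
\end{equation*}
which is 1-Lipschitz with $|\nabla f|=1$ almost everywhere. Define $s^{+},s^{-}\geq 0$ by $|\{f\geq s^{+}\}|=|\{f\leq -s^{-}\}|=\mathcal{A}/4$, so that the ``middle strip'' $\{-s^{-}\leq f\leq s^{+}\}$ has area exactly $\mathcal{A}/2$ and every level $f^{-1}(t)$ with $t\in[-s^{-},s^{+}]$ separates $\mathbb{S}^{2}$ into two regions each of area at least $\mathcal{A}/4$. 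After smoothing $g$ and a generic perturbation (absorbed in $\varepsilon$), these levels are smooth 1-manifolds.

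The coarea formula then yields
\begin{equation*}
\int_{-s^{-}}^{s^{+}}\longueur(f^{-1}(t))\,\ud t\leq|\{-s^{-}\leq f\leq s^{+}\}|=\frac{\mathcal{A}}{2},
\end{equation*}
and the near minimality of $\gamma_{0}$ forces $\longueur(f^{-1}(t))\geq\ell$ throughout the interval (modulo a Morse-theoretic surgery that picks from the possibly disconnected level set an honest simple dividing curve at negligible extra length); this gives $\ell(s^{+}+s^{-})\leq\mathcal{A}/2$. To finish, I combine this with the complementary lower bound $s^{+}+s^{-}\geq\sqrt{\mathcal{A}/24}$, obtained by re-running the coarea estimate now on each of the two ``deep'' interior regions $\{f\geq s^{+}\}$ and $\{f\leq -s^{-}\}$ (each of area $\mathcal{A}/4$) together with the minimality of $\gamma_{0}$; the two bounds multiply to give $\ell\leq\mathcal{A}/(2(s^{+}+s^{-}))\leq\sqrt{6\mathcal{A}}$.

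The main obstacle is this second, lower, bound on $s^{+}+s^{-}$: it is precisely the step where Papasoglu's proof invokes the Besicovitch inequality (yielding a worst-case product of widths), and replacing it by a direct coarea argument on the deep interior regions---now capturing the \emph{average} of fiber lengths---is what delivers the factor $\sqrt{2}$ improvement in the constant. Once the length estimate is secured, the remaining Morse-theoretic surgery that turns a potentially disconnected optimal level set into a simple closed dividing curve is standard and costs only an arbitrarily small length, absorbed in $\varepsilon$.
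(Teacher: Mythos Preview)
Your overall framework---distance from the curve $\gamma_{0}$, the signed function $f$, the strip $[-s^{-},s^{+}]$ where both sides have area at least $\mathcal{A}/4$, and the coarea bound $\ell\,(s^{+}+s^{-})\leq\mathcal{A}/2$---is reasonable, but the proof does not close. The decisive step is your claimed lower bound $s^{+}+s^{-}\geq\sqrt{\mathcal{A}/24}$, and the justification you offer (``re-running the coarea estimate on the deep interior regions together with the minimality of $\gamma_{0}$'') does not work: for $t>s^{+}$ the level sets $f^{-1}(t)$ bound regions of area \emph{smaller} than $\mathcal{A}/4$, so they lie \emph{outside} $\Gamma$ and minimality gives no lower bound on their lengths; coarea on $\{f\geq s^{+}\}$ therefore yields no control on $s^{+}$. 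In fact the inequality $s^{+}+s^{-}\geq\sqrt{\mathcal{A}/24}$ is essentially equivalent to what you are trying to prove: if you grant your step~3, then $\ell>\sqrt{6\mathcal{A}}$ would force $s^{+}+s^{-}<\sqrt{\mathcal{A}/24}$, so establishing the latter inequality is no easier than the theorem itself. There is also a secondary gap in step~3: for $t\in(0,s^{+}]$ the superlevel set $\{f>t\}$ can consist of several disks each of area strictly less than $\mathcal{A}/4$ (with total at least $\mathcal{A}/4$), in which case \emph{no} single component of $f^{-1}(t)$ lies in $\Gamma$, and the inequality $\longueur(f^{-1}(t))\geq\ell$ is not available; the ``Morse-theoretic surgery'' you invoke does not repair this, since connecting disjoint components costs length comparable to their mutual distance, not $\varepsilon$.

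The paper's argument is structurally different and sidesteps both issues. It works inside the larger disk $D_{1}$ and uses the distance from a \emph{point} $A\in\gamma$ rather than from the curve. The ``no $\varepsilon$-shortcut'' property on $D_{1}$ implies that the level set $F_{r}=\{d(A,\cdot)=r\}\cap\overline{D_{1}}$ contains an arc joining the two points $u(r),v(r)\in\gamma$ at arclength $r$ from $A$, and hence has length at least $\min\!\bigl(2r,\ \longueur(\gamma)-2r\bigr)-\varepsilon$. Integrating this explicit lower bound from $r=0$ to $r=\longueur(\gamma)/2$ via coarea gives directly $\mathcal{A}(D_{1})\geq\longueur(\gamma)^{2}/8-\varepsilon$, and the conclusion follows from $\mathcal{A}(D_{1})\leq\tfrac{3}{4}\mathcal{A}(\mathbb{S}^{2},g)$. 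The point is that the fiber lengths are bounded below by a known function of $r$ (coming from the shortcut argument) rather than by the unknown $\ell$, so no second inequality on a ``width'' is needed.
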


\begin{proof}
Let $\Gamma$ be the set of simple closed curves dividing $(\mathbb{S}^{2},g)$ into two disks of area $\geq\frac{\mathcal{A}(\mathbb{S}^{2},g)}{4}$. Let $L=\inf_{\gamma\in \Gamma}\longueur (\gamma)$. Now if we fix an $\varepsilon > 0$, we can take $\gamma\in U$ such as $\longueur (\gamma) < L+\varepsilon$ and denote by $D_{1}$ and $D_{2}$ the two disks bounded by $\gamma$ with $\mathcal{A}(D_{1})\geq \mathcal{A}(D_{2})$ (which implies $\mathcal{A}(D_{1})\geq \frac{\mathcal{A} (\mathbb{S}^{2},g)}{2}$).

Then $\gamma$ cannot be $\varepsilon$-shortcuts on $D_{1}$ -- that is there doesn't exist any $\delta\subset D_{1}$ joigning two points $a$ and $b$ of $\gamma$ of length $\longueur (\delta) < \longueur (\gamma_{1}) - \varepsilon$ where $\gamma_{1}\subset \gamma$ is the shortest curve between $a$ and $b$ on $\gamma$. In the contrary then either $\delta\cup\gamma_{1}$ or $\delta\cup\gamma_{2}$ would bound a disk of area $\geq \frac{\mathcal{A} (\mathbb{S}^{2},g)}{4}$ with a length $< L$ (calling $\gamma_{2} = \gamma\setminus\gamma_{1}$), contradiction.

\begin{figure}
\begin{center}
\includegraphics[scale =0.8]{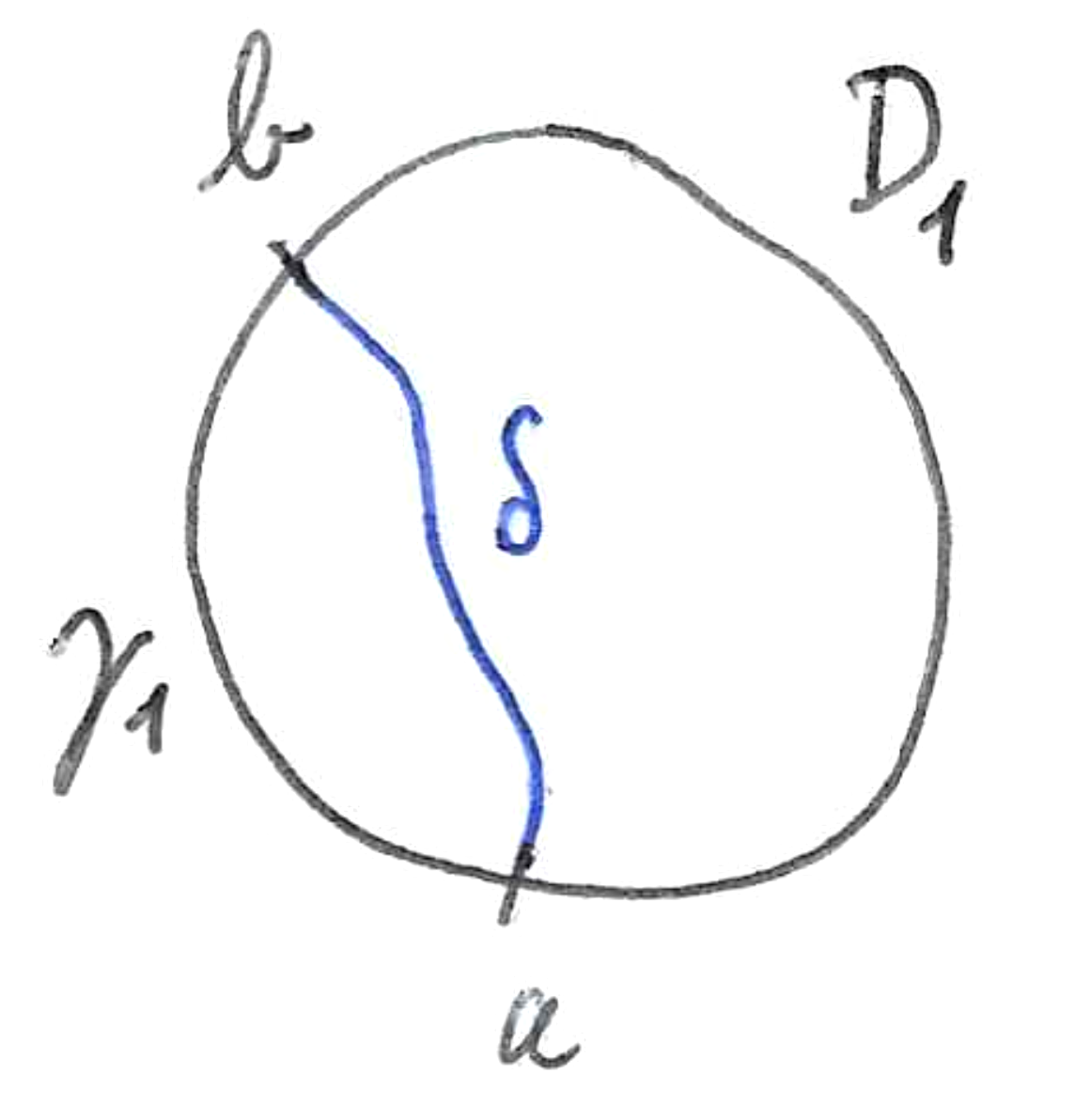}
\caption{$\delta$ can't be an $\varepsilon$-shortcut}
\label{fig:raccourci} 
\end{center}
\end{figure}

Now fix $\varepsilon > 0$ and rather take $\gamma\in U$ a curve of length $\longueur (\gamma)<L+\frac{\varepsilon}{L}$ (taking $\varepsilon$ small enough to have $\longueur (\gamma)<2L$). On $D_{1}$ the disk of greatest area, there isn't any $\frac{\varepsilon}{L}$-shortcut between two points of $\gamma$. Fix any point $A$ of $\gamma$ and denote for every $r\geq 0$ $F_{r}:=\{ m\in \overline{D_{1}}\ |\ d(A,m)=r \}$. As $d(A,\cdot)$ is a Lipschitz continuous function, it is differentiable almost everywhere and, according to Sard's lemma, $F_{r}$ is a submanifold for almost every $r$; we will restrict ourselves to such $r$. Let $u(r)$ and $v(r)$ be the two points of $\gamma$ away from $r$ from $A$ when $r<\longueur(\gamma)$. As $F_{r}$ is a submanifold on $D_{1}$ which is a submanifold with bondary on $D_{1}\cup\gamma$, there is a path $\delta_{r}$ of $F_{r}$ connecting $u(r)$ to $v(r)$. Then, as $\frac{\varepsilon}{L}$-shortcuts don't exist,
 \begin{equation*}
  \longueur (\delta_{r}) \geq \left\{
  \begin{array}{c c c}
   2r-\frac{\varepsilon}{L} & \text{ if }& r\leq \frac{\longueur (\gamma)}{4}\\
   \longueur (\gamma) - 2r-\frac{\varepsilon}{L}  & \text{ if }&  \frac{\longueur (\gamma)}{4}\leq r\leq \frac{\longueur (\gamma)}{2}.
  \end{array}
\right.
 \end{equation*}
 
 \begin{figure}
\begin{center}
\includegraphics[scale =0.8]{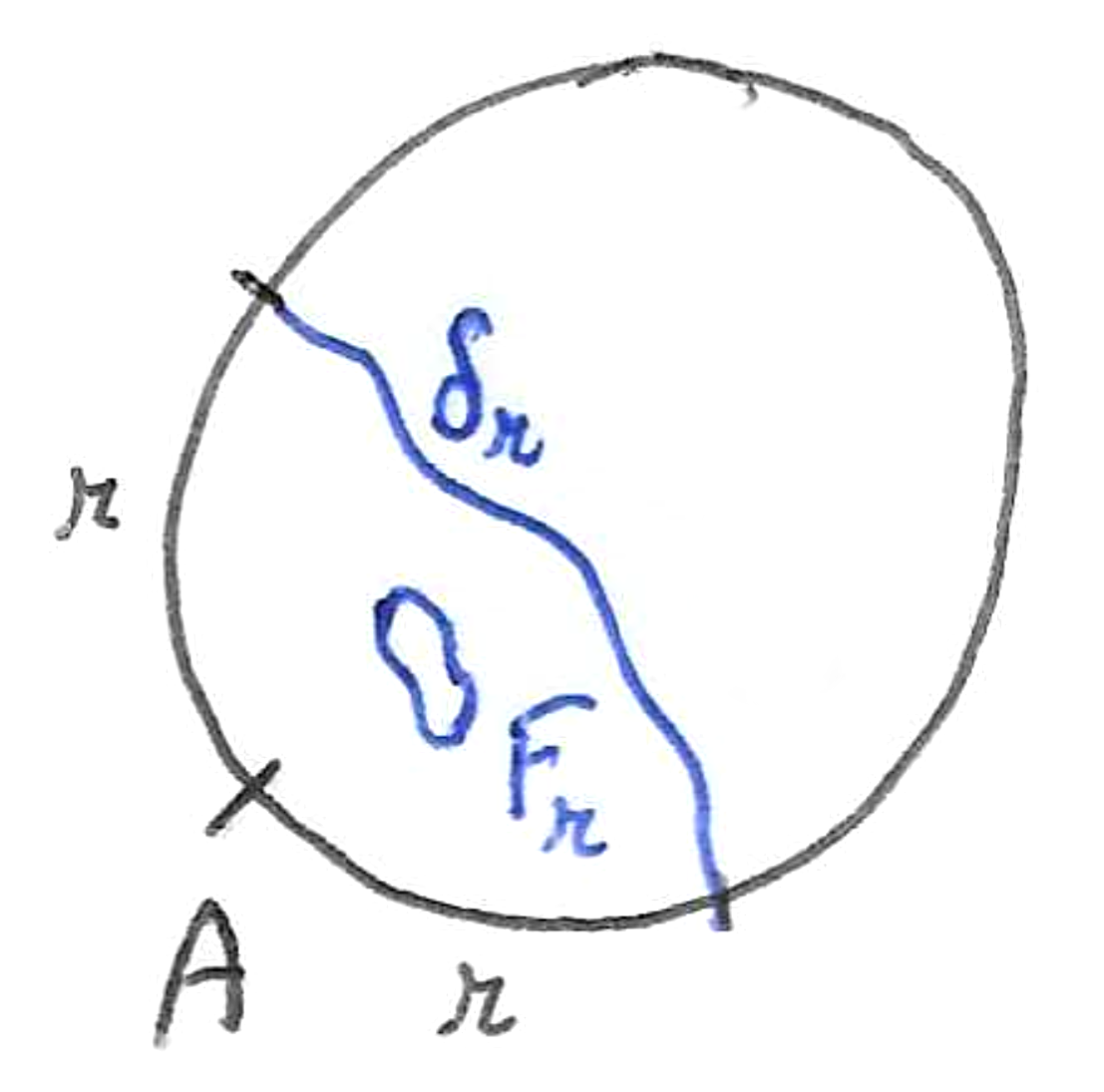}
\caption{The loci $F_{r}$ and $\delta_{r}$}
\label{fig:coaire} 
\end{center}
\end{figure}
 
 Then, using the coarea formula:
 \begin{eqnarray*}
  \mathcal{A}(D_{1}) &=& \int_{0}^{+\infty} \longueur (F_{r})\ud r \\
   &\geq& \int_{0}^{\frac{\longueur(\gamma)}{2}} \longueur (\delta_{r})\ud r \\
   &\geq& 2\int_{0}^{\frac{\longueur(\gamma)}{4}} \left(2r-\frac{\varepsilon}{L}\right)\ud r \\
   &\geq& \frac{\longueur(\gamma)^{2}}{8} -\varepsilon.
 \end{eqnarray*}

 In addition to the fact that $\frac{3}{4}\mathcal{A}(\mathbb{S}^{2})\geq \mathcal{A}(D_{1})$ and that this inequality holds for any $\varepsilon >0$ short enough, we can conclude.

\end{proof}

An anonymous reviewer suggested another way to improve it, using an argument given by Gromov in \cite{fill} (section 5.5.B', item (e)):
\begin{prop} \label{papRiem2}
Let $(\mathbb{S}^{2},g)$ be a Riemannian two-sphere and denote by $\mathcal{A}$ its Riemannian area. Then for any $\varepsilon>0$ there exists a closed curve $\gamma$ dividing $(\mathbb{S}^{2},g)$ into two disks $D_{1}$ and $D_{2}$ of area at least $\frac{\mathcal{A}(\mathbb{S}^{2},g)}{4}$ and whose length satisfies
\begin{equation*}
\longueur (\gamma ) \leq \sqrt{\frac{3\pi}{2}\mathcal{A}(\mathbb{S}^{2},g)} + \varepsilon.
\end{equation*}
\end{prop}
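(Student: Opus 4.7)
The plan is to reuse the setup of the proof of Proposition \ref{papRiem} and to replace the coarea estimate by Pu's isosystolic inequality for $\mathbb{RP}^{2}$. Fix $\varepsilon > 0$, choose $\gamma \in \Gamma$ with $\longueur(\gamma) < L + \eta$ for some small $\eta$ (to be fixed later in terms of $\varepsilon$), and let $D_{1}, D_{2}$ be the two disks bounded by $\gamma$ with $\mathcal{A}(D_{1}) \geq \mathcal{A}(D_{2})$. Since $\mathcal{A}(D_{2}) \geq \mathcal{A}(\mathbb{S}^{2}, g)/4$, we have $\mathcal{A}(D_{1}) \leq \frac{3}{4}\mathcal{A}(\mathbb{S}^{2}, g)$. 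The same no-shortcut argument as in Proposition \ref{papRiem} shows that $D_{1}$ admits no $\eta$-shortcut between two points of $\gamma$.

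Parametrising $\gamma$ by arclength as $\gamma : \R/\longueur(\gamma)\Z \to \mathbb{S}^{2}$, I would form a quotient space $Q$ of $\overline{D_{1}}$ by identifying each $\gamma(s)$ with its antipode $\gamma(s + \longueur(\gamma)/2)$. Since a topological disk modulo antipodal identification of its boundary is a real projective plane, $Q$ is homeomorphic to $\mathbb{RP}^{2}$ and inherits a Riemannian metric whose area equals $\mathcal{A}(D_{1})$. Pu's inequality $\textup{sys}(Q)^{2} \leq \frac{\pi}{2}\mathcal{A}(Q)$ then applies.

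To lower-bound the systole, observe that every noncontractible loop in $Q$ lifts to a path $\delta \subset \overline{D_{1}}$ joining some $\gamma(s)$ to $\gamma(s + \longueur(\gamma)/2)$. Each of the two arcs of $\gamma$ between these points has length exactly $\longueur(\gamma)/2$, so the no-$\eta$-shortcut property gives $\longueur(\delta) \geq \longueur(\gamma)/2 - \eta$, hence $\textup{sys}(Q) \geq \longueur(\gamma)/2 - \eta$. Combining with Pu's inequality,
\begin{equation*}
\left( \frac{\longueur(\gamma)}{2} - \eta \right)^{2} \leq \frac{\pi}{2}\mathcal{A}(D_{1}) \leq \frac{3\pi}{8}\mathcal{A}(\mathbb{S}^{2}, g),
\end{equation*}
so $\longueur(\gamma) \leq \sqrt{\frac{3\pi}{2}\mathcal{A}(\mathbb{S}^{2}, g)} + 2\eta$, which yields the claim after choosing $\eta < \varepsilon/2$.

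The main technical point is that the metric on $Q$ is at best Lipschitz, with a ridge singularity along the image of $\gamma$ where the two sides are glued. I expect this to be handled either by invoking the version of Pu's inequality valid for continuous/Lipschitz Riemannian metrics, or by a standard smoothing of the metric in a small neighbourhood of the ridge that perturbs area and systole by an amount absorbed into $\eta$.
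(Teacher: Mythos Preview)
Your proposal is correct and follows essentially the same approach as the paper: take a near-minimizing $\gamma$, use the no-shortcut property on the larger disk $D_{1}$, identify antipodal boundary points to obtain an $\mathbb{RP}^{2}$, bound its systole from below by $\longueur(\gamma)/2-\eta$, and apply Pu's inequality together with $\mathcal{A}(D_{1})\leq \tfrac{3}{4}\mathcal{A}(\mathbb{S}^{2},g)$. The paper's proof is in fact terser than yours; your explicit remark about the ridge singularity of the quotient metric and the need for a smoothing or a Lipschitz version of Pu's inequality is a point the paper leaves implicit.
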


\begin{proof}
Lets having the same approach as the previous proof, taking $\gamma\in \Gamma$ a curve of length $\longueur (\gamma)<L+\varepsilon$ dividing $\mathbb{S}^{2}$ on two disk $D_{1}$ and $D_{2}$ with the same conditions.

As there is no $\varepsilon$-shortcut, any curve joining two antipodal points of $\partial D_{1}$ is longer than $\frac{\longueur (\gamma )}{2} - \varepsilon$. By identification of these antipodal points, $D_{1}$ gives a projective plane of systole greater than $\frac{\longueur (\gamma )}{2} - \varepsilon$, thus, applying Pu's systolic inequality,
\begin{equation*}
\mathcal{A}(D_{1}) \geq \frac{2}{\pi}\left(\frac{\longueur (\gamma )}{2} - \varepsilon\right)^{2}.
\end{equation*}
As $\frac{3}{4}\mathcal{A}(\mathbb{S}^{2})\geq \mathcal{A}(D_{1})$, we then conclude.
\end{proof}

\begin{rmq} \label{optimal}
The equality case of Pu's theorem tells us about the (un)optimality of this inequality. Precisely, there is no riemannian 2-sphere $(\mathbb{S}^{2},g)$ whose minimal closed curve $\gamma$ satisfying Papasoglu's hypothesis has length:
\begin{equation*}
\longueur (\gamma ) = \sqrt{\frac{3\pi}{2}\mathcal{A}(\mathbb{S}^{2},g)}.
\end{equation*}
As a matter of fact, this would imply equality cases $\frac{3}{4}\mathcal{A}(\mathbb{S}^{2},g) = \mathcal{A}(D_{1})$ and $\mathcal{A}(D_{1}) = \frac{2}{\pi}\left(\frac{\longueur (\gamma )}{2}\right)^{2}$. By Pu's theorem, $D_{1}$ is then hemisphere of the round sphere of radius $\frac{\longueur (\gamma )}{2\pi}$.
Let see that $D_{1}$ hemisphere of 2-sphere of radius $r$ implies that $(\mathcal{S}^{2},g)$ is the round sphere of radius $r$. This will conclude because $\gamma$ would be an equator which is obviously not minimal for Papasoglu's lemma.

In order to prove it, we will apply Pu's theorem to $D_{2}$, thus $D_{2}$ would be a round hemisphere of radius $r$. Let see that any curve $\delta_{2}$ joining two antipodal points $N$ and $S$ of $D_{2}$ is longer than $\frac{\longueur (\gamma )}{2} = \pi r$. Suppose the contrary for some $\delta_{2}$ in $D_{2}$ joining $N$ and $S$, then, gluing this curve with any meridian $\delta_{1}$ of the hemisphere $D_{1}$ joining $N$ and $S$, we obtain a close simple curve $\delta = \delta_{1}\cdot\delta_{2}$. As meridians of a round hemisphere of radius $r$ have length $\pi r$, $\longueur (\delta) < \longueur (\gamma)$. But, according to the intermediate value theorem, there exists a meridian $\delta_{1}$ such as $\delta$ divides $(\mathbb{S}^{2},g)$ into disks of same area, a contradiction with $\gamma$ minimality.
\begin{figure}
\begin{center}
\includegraphics[scale =0.7]{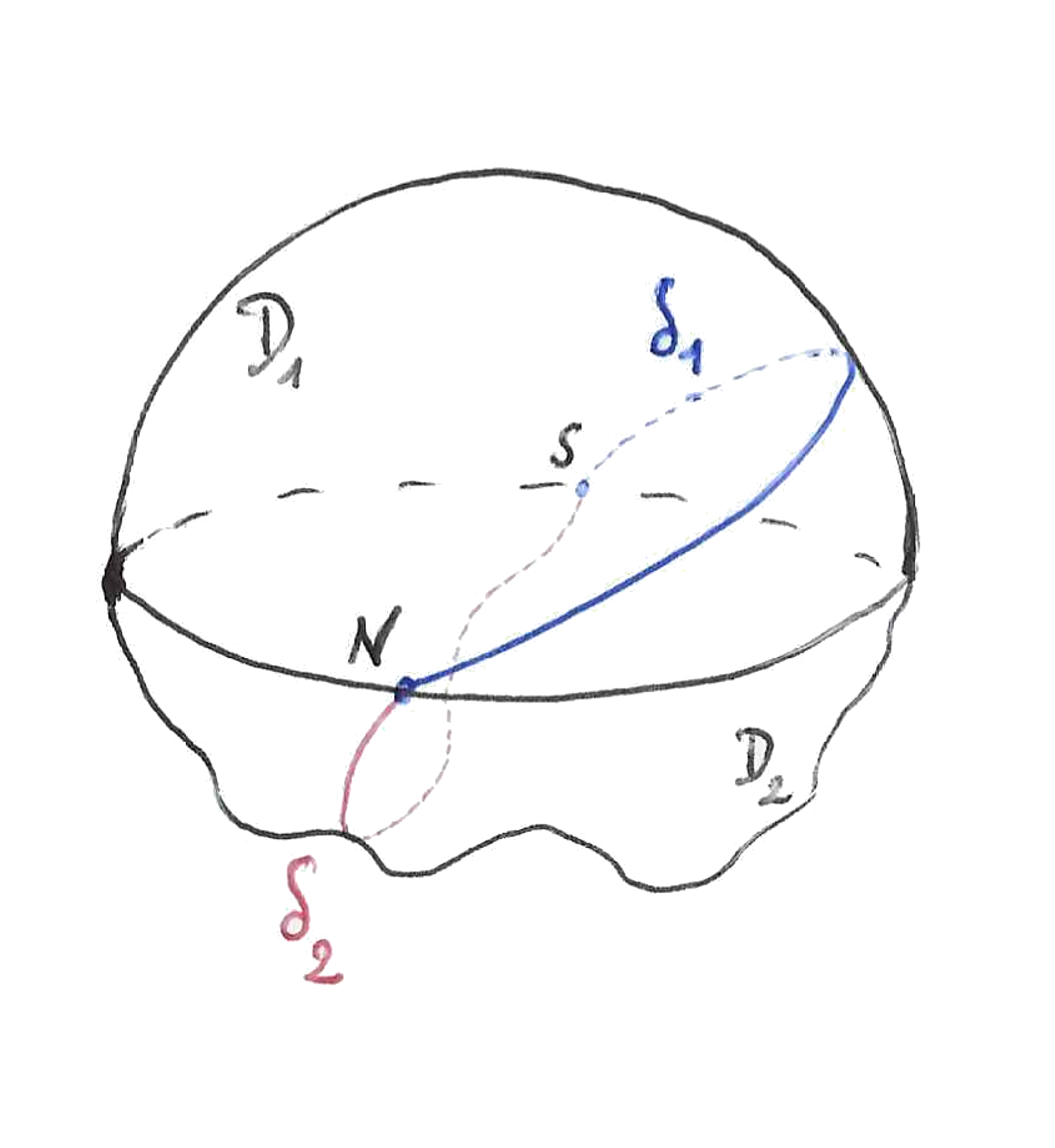}
\caption{$\delta_{2}$ glued with a meridian $\delta_{1}$}
\label{fig:unoptimal} 
\end{center}
\end{figure}
\end{rmq}

\section{Besicovitch's lemma for Finsler manifolds}
In this section, we extend Papasoglu's lemma to Finsler manifolds for any good notion of Finsler area. For this, we first give a natural generalisation of the Besicovitch lemma.

\subsection{Length metric and volume on a Finsler manifolds}

The manifolds used here will be closed and connected. See \cite{BBI} for details and motivations about the results of this section.

Recall that a \emph{continuous Finsler metric} on a manifold $M$ is a continuous function $\Phi : TM\rightarrow [0,+\infty[$ whose restriction to every tangent space is an asymmetric norm. Such a manifold $M$ is said to be a \emph{Finsler manifold} $(M,\Phi)$. If $\Phi(-v_{x}) = \Phi(v_{x})$ for all tangent vectors, we shall say that $\Phi$ is a \emph{reversible} coninuous Finsler metric.

We can then define a \emph{length metric} $d_{\Phi}$ on $M$ by:
\begin{equation*}
\forall x,y\in M, \quad d_{\Phi}(x,y) = \inf_{\gamma : x\leadsto y} \longueur_{\Phi}(\gamma)
\end{equation*}
where the infimum is taken on the piecewise-$\mathcal{C}^{1}$ curves $\gamma: [0,1]\rightarrow M$ joining $x$ to $y$ and
\begin{equation*}
\longueur_{\Phi}(\gamma) := \int_{0}^{1}{\Phi(\gamma'(t))\ud t}.
\end{equation*}

We will restrict ourselves to the case of reversible continuous Finsler metrics.

Contrary to the Riemannian case, there isn't one natural way to define a volume on Finsler manifolds. We will give two natural definitions. The Busemann-Hausdorff volume could be defined, for all open subset $U\subset (M,\Phi)$ as:
\begin{equation*}
 v_{BH}(U) := \int_{U} \frac{|B_{g}|}{|B_{\Phi}|}v_{g}
\end{equation*}
where $v_{g}$ is the volume associated to $g$ which is a Riemannian auxiliary metric, for every $p\in M$, $|A|$ designated the $g_{p}$-normalised Lebesgue measure of $A\subset T_{p}M$ and $B_{g}$ and $B_{\Phi}$ are unit balls of $T_{p}M$ endowed with the normed metrics $g_{p}$ and $\Phi_{p}$ respectively. This definition does not depend on $g$ and boils done to normalise the volume of the unit ball of each tangent space $(T_{p}M,\Phi_{p})$.

The Holmes-Thompson volume is defined, for all open subset $U\subset (M,\Phi)$ as:
\begin{equation*}
 v_{BH}(U) := \int_{U} \frac{|B_{\Phi}^{*}|}{|B_{g}|}v_{g}
\end{equation*}
where, for all $p\in M$ and all convex $K$ of the euclidean space $(T_{p}M,g_{p})$, $K^{*} := \{ u \in T_{p}M \ |\ \forall w\in K,\ g_{p}(u,w) \leq 1 \}$ is the dual convex of $K$. Compared to the Busemann-Hausdorff volume, here we normalise the unit dual ball.

In the case of a Riemannian manifold $(M,g)$, Busemann-Hausdorff and Holmes-Thompson volumes are equal and $v_{BH}=v_{HT}=v_{g}$.

These two volumes are \emph{monotonous}: $v$ is monotonous if for all short application between Finsler manifolds $f:(M,\Phi) \rightarrow (N,\Psi)$,
\begin{equation*}v(f(M),\Psi)\leq v(M,\Phi).\end{equation*}

We refer to paragraph 5.5.3 of \cite{BBI} for an in-depth analysis of the general notion of Finsler volumes (which includes these two). 

\subsection{Finsler Besicovitch's Lemma}

We show that we can deduce a more general statement of Besicovitch's lemma from the proof given in section 4.28 of \cite{Gr}:

\begin{prop}[Finsler Besicovitch's lemma] \label{prop:besicovitch}
Let $P\subset \R^{n}$ be a $n$-dimensionnal parallelotope endowed with a reversible continuous Finsler metric $\Phi$. If $(F_i,G_i)$ (with $1\leq i\leq n$) denotes its pairs of opposite faces and $d_{i} := d_{\Phi}(F_{i},G_{i})$, then, for any Finsler volume $v$,
\begin{equation*}
v(P,\Phi) \geq v\left(\prod_{i=1}^{n}[0,d_{i}],\|\cdot\|_{\infty}\right).
\end{equation*}
\end{prop}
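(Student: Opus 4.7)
The plan is to adapt Gromov's Riemannian argument (\cite{Gr}, 4.28) to the Finsler setting: I would build an explicit $1$-Lipschitz surjection $\tilde{f}:(P,d_{\Phi})\to(B,\|\cdot\|_{\infty})$, where $B:=\prod_{i=1}^{n}[0,d_{i}]$, and then conclude by the monotonicity of $v$. The point of the generalisation is that the role played by the Riemannian volume in Gromov's proof is taken over entirely by the monotonicity axiom, so the argument then applies to any Finsler volume simultaneously.

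For the construction I would set $f_{i}(x):=d_{\Phi}(F_{i},x)$, $\tilde{f}_{i}(x):=\min(f_{i}(x),d_{i})$, and $\tilde{f}:=(\tilde{f}_{1},\ldots,\tilde{f}_{n}):P\to B$. Reversibility of $\Phi$ makes $d_{\Phi}$ symmetric, so each $f_{i}$ and hence each $\tilde{f}_{i}$ is $1$-Lipschitz, which gives
\[
\|\tilde{f}(x)-\tilde{f}(y)\|_{\infty}=\max_{i}|\tilde{f}_{i}(x)-\tilde{f}_{i}(y)|\leq d_{\Phi}(x,y),
\]
i.e.\ $\tilde{f}$ is short. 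Moreover $\tilde{f}(F_{i})\subset\{x_{i}=0\}$ and, because $f_{i}(y)\geq d_{\Phi}(F_{i},G_{i})=d_{i}$ for every $y\in G_{i}$, also $\tilde{f}(G_{i})\subset\{x_{i}=d_{i}\}$; in particular $\tilde{f}(\partial P)\subset\partial B$.

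The hard part is to check that $\tilde{f}$ is surjective. After an affine identification $P=\prod[0,a_{i}]$, the standard affine homeomorphism $\phi_{0}(x):=(d_{i}x_{i}/a_{i})_{i}$ satisfies the same face compatibility as $\tilde{f}$, and the straight-line homotopy $\phi_{t}:=(1-t)\tilde{f}+t\phi_{0}$ stays inside $B$ by convexity while keeping $\partial P$ mapped into $\partial B$ face by face. Hence $\tilde{f}|_{\partial P}$ is homotopic, as a map $\partial P\to\partial B$, to $\phi_{0}|_{\partial P}$, which has degree $\pm 1$. If some $y_{0}$ in the interior of $B$ were missed by $\tilde{f}$, composing with a radial retraction $B\setminus\{y_{0}\}\to\partial B$ would extend $\tilde{f}|_{\partial P}$ to a map $P\to\partial B$, forcing its degree to be zero; this contradiction shows $\tilde{f}(P)=B$.

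Finally, applying monotonicity of $v$ to the short map $\tilde{f}:(P,\Phi)\to(B,\|\cdot\|_{\infty})$ yields
\[
v(B,\|\cdot\|_{\infty})=v(\tilde{f}(P),\|\cdot\|_{\infty})\leq v(P,\Phi),
\]
which is the desired inequality. The main obstacle in this scheme is really the topological step above: since $\tilde{f}$ is only Lipschitz and not smooth \emph{a priori}, one has to be somewhat careful when invoking degree theory, and the cleanest way I see is to homotope to the explicit affine model $\phi_{0}$ rather than computing the degree of $\tilde{f}$ itself.
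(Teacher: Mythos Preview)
Your argument is correct and is essentially the paper's own proof: both build the short map $x\mapsto (d_{\Phi}(F_{i},x))_{i}$ into $(\R^{n},\|\cdot\|_{\infty})$, use a straight-line homotopy to an affine face-preserving homeomorphism to run the degree/surjectivity argument, and finish by monotonicity of the Finsler volume. The only cosmetic difference is that you truncate the coordinates by $d_{i}$ so as to land exactly in $B$ and obtain $\tilde{f}(\partial P)\subset\partial B$, whereas the paper keeps the untruncated map into $\R^{n}$ and shows $f(P)\supset C$; the topological core is identical.
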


\begin{proof}
Let $f$ be the continous function
\begin{equation*}
f:\left\{
\begin{array}{c c c}
P &\rightarrow& \R^{n}\\
x &\mapsto& \left(d_{\Phi}(x,F_{i})\right)_{1\leq i\leq n}
\end{array}\right. .
\end{equation*}
As for all points $x$ and $y$ in $P$,
\begin{equation} \label{ineqbes}
|d_{\Phi}(x,F_{i})-d_{\Phi}(y,F_{i})|\leq d_{\Phi}(x,y)
\end{equation}
for all $i$, considering the maximum among $i$, one has that $f:(P,\Phi)\rightarrow (\R^{n},\|\cdot\|_{\infty})$ is short. Thus, proving $f(P)\supset \prod_{i=1}^{n}[0,d_{i}]=:C$ is enough  to obtain the inequality.

Note that the boundary of $P$ is mapped outside the interior of $C$, more precisely, writing $f=(f^{1},\ldots ,f^{n})$, $f^{i}(F_{i}) = 0$ whereas $f^{i}(G_{i})\subset [d_{i},+\infty[$. From the definition of $P$, there exists an homeomorphism $h:P\rightarrow C$ mapping each face onto a face (with the obvious choice). So $f_{t} = tf_{|\partial P}+(1-t)h_{|\partial P}$ defines a homotopy from $h_{|\partial P}$ to $f_{|\partial P}$ with values in $\R^{n}\setminus \overset{\circ}{C}$. If there exists $y\in \overset{\circ}{C}\setminus f(P)$, then $f$ should be homotopic to $0$ in $\R^{n}\setminus y \supset \R^{n}\setminus \overset{\circ}{C}$, so $h_{|\partial P}$ should also be homotopic to $0$ in $\R^{n}\setminus y$, a contradiction ($h(P)\ni y$).

\begin{figure}
\begin{center}
\includegraphics[scale =0.7]{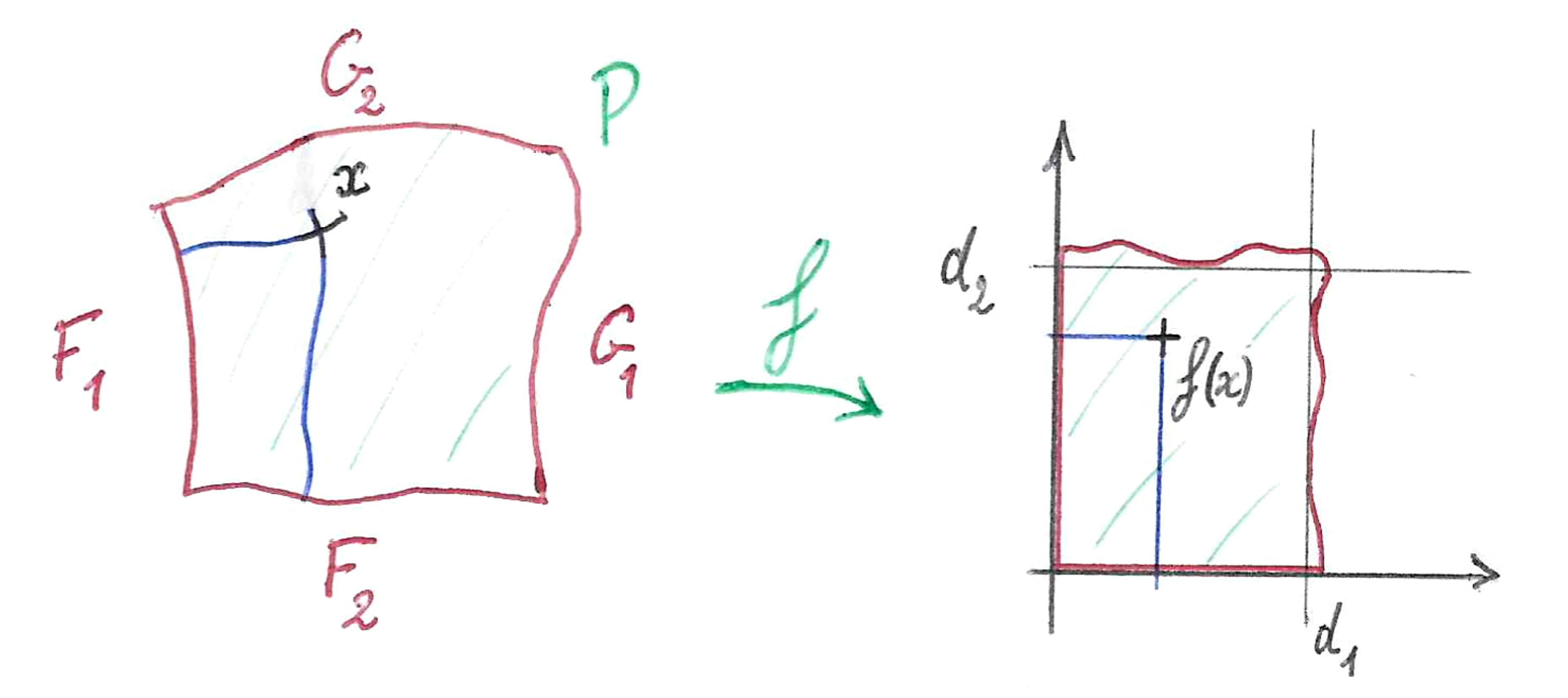}
\caption{Scheme of the proof}
\label{fig:besicovitch} 
\end{center}
\end{figure}

As $v$ is monotonous and $f$ is short, one has the chain of inequalities
\begin{equation*}
v(P,\Phi)\geq v\left(f(P),\|\cdot\|_{\infty}\right)\geq v(C,\|\cdot\|_{\infty}).
\end{equation*}

\end{proof}

\begin{rmq}
The proof provides us some information about the equality case. As $f(P)\supset C$, in order to have $v\left(f(P),\|\cdot\|_{\infty}\right)= v(C,\|\cdot\|_{\infty})$, $f(P)$ and $C$ must only differ from a negligible set of $\R^{n}$. As $f:(P,\Phi)\rightarrow (\R^{n},\|\cdot\|_{\infty})$ is short, in order to have $v(P,\Phi)=v(f(P),\|\cdot\|_{\infty})$, $f$ needs to be locally isometric almost everywhere -- meaning that $\ud f_{x}$, which is defined for almost every $x$, has norm $1$ almost everywhere. Finally, $v(P,\Phi)=v(C,\|\cdot\|_{\infty})$ implies $(P,\Phi)$ to be locally isometric almost everywhere to $\left(\widetilde{C},\|\cdot\|_{\infty}\right)\supset \left(C,\|\cdot\|_{\infty}\right)$ with $\widetilde{C}\setminus C$ negligible in $\R^{n}$.
\end{rmq}

\begin{exs} \label{ex:besicovitch}\ 
\begin{itemize}
\item For $v=v_{BH}$, it gives the sharp inequality:
\begin{equation*}
v_{BH}(P,\Phi) \geq \frac{b_{n}}{2^{n}}\prod_{i=1}^{n}d_{i}
\end{equation*}
where $b_{n}=\frac{\pi^{\frac{n}{2}}}{\Gamma\left(n+\frac{1}{2}\right)}$ designates the volume of the standard Euclidean unit ball.
\item For $v=v_{HT}$, it gives the sharp inequality:
\begin{equation*}
v_{HT}(P,\Phi) \geq \frac{2^{n}}{n! b_{n}}\prod_{i=1}^{n}d_{i}.
\end{equation*}
\end{itemize}
\end{exs}

The symmetry of $d_{\Phi}$ is key to get the inequality (\ref{ineqbes}), thus we can't directly extend this proof to the asymmetric Finsler case. Nevertheless, in the case of the Holmes-Thompson volume, the Roger-Shepard inequality allows us to assert the following

\begin{prop} \label{asym}
Let $P\subset \R^{n}$ be a $n$-dimensionnal parallelotope endowed with an asymmetric continuous Finsler metric $\Phi$. If $(F_i,G_i)$ (with $1\leq i\leq n$) denotes its pairs of opposite faces, then, 
\begin{equation*}
v_{HT}(P,\Phi) \geq \frac{n!}{(2n)!}\frac{2^{n}}{b_{n}}\prod_{i=1}^{n}\left(d_{\Phi}(F_{i},G_{i})+d_{\Phi}(G_{i},F_{i})\right).
\end{equation*}
\end{prop}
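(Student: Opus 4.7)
The plan is to reduce the asymmetric case to the reversible Finsler Besicovitch lemma (Proposition~\ref{prop:besicovitch}) via symmetrisation, and to control the resulting loss in Holmes--Thompson volume by the Rogers--Shephard inequality. I introduce on $P$ the reversible continuous Finsler metric
\[
\hat\Phi(v) := \tfrac{1}{2}\bigl(\Phi(v)+\Phi(-v)\bigr).
\]
First I would check the distance bound $d_{\hat\Phi}(F_i,G_i)\geq \tfrac{1}{2}\bigl(d_\Phi(F_i,G_i)+d_\Phi(G_i,F_i)\bigr)$: for any piecewise-$\mathcal{C}^1$ curve $\gamma$ from $x$ to $y$, the reverse curve $\bar\gamma(t):=\gamma(1-t)$ goes from $y$ to $x$ and
\[
\longueur_{\hat\Phi}(\gamma)=\tfrac{1}{2}\bigl(\longueur_\Phi(\gamma)+\longueur_\Phi(\bar\gamma)\bigr)\geq \tfrac{1}{2}\bigl(d_\Phi(x,y)+d_\Phi(y,x)\bigr);
\]
taking the infimum over $\gamma$ and then over $(x,y)\in F_i\times G_i$ (using $\inf(f+g)\geq \inf f+\inf g$) gives the claim. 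Applying Proposition~\ref{prop:besicovitch} and Examples~\ref{ex:besicovitch} to the reversible metric $\hat\Phi$ then yields
\[
v_{HT}(P,\hat\Phi)\geq \frac{2^n}{n!\,b_n}\prod_{i=1}^{n}d_{\hat\Phi}(F_i,G_i)\geq \frac{1}{n!\,b_n}\prod_{i=1}^{n}\bigl(d_\Phi(F_i,G_i)+d_\Phi(G_i,F_i)\bigr).
\]

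Next I would identify the dual unit ball of $\hat\Phi$ pointwise. Since $\Phi(v)=h_{B_\Phi^*}(v)$ is the support function of $B_\Phi^*$ at $v$ and similarly $\Phi(-v)=h_{-B_\Phi^*}(v)$, the additivity of support functions under Minkowski addition gives $\hat\Phi = h_{\frac12(B_\Phi^*-B_\Phi^*)}$, hence
\[
B_{\hat\Phi}^*\;=\;\tfrac{1}{2}\bigl(B_\Phi^*-B_\Phi^*\bigr).
\]
The Rogers--Shephard inequality applied to the convex body $B_\Phi^*\subset T_pM$ asserts $|B_\Phi^*-B_\Phi^*|\leq \binom{2n}{n}|B_\Phi^*|$, so pointwise
\[
|B_{\hat\Phi}^*|\;=\;\frac{1}{2^n}|B_\Phi^*-B_\Phi^*|\;\leq\;\frac{1}{2^n}\binom{2n}{n}|B_\Phi^*|.
\]
Dividing by $|B_g|$ and integrating against the auxiliary Riemannian volume yields $v_{HT}(P,\hat\Phi)\leq \tfrac{1}{2^n}\binom{2n}{n}v_{HT}(P,\Phi)$.

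Combining the two displays gives
\[
v_{HT}(P,\Phi)\;\geq\;\frac{2^n}{\binom{2n}{n}}\cdot\frac{1}{n!\,b_n}\prod_{i=1}^{n}\bigl(d_\Phi(F_i,G_i)+d_\Phi(G_i,F_i)\bigr)\;=\;\frac{n!}{(2n)!}\frac{2^n}{b_n}\prod_{i=1}^{n}\bigl(d_\Phi(F_i,G_i)+d_\Phi(G_i,F_i)\bigr),
\]
which is exactly the desired inequality; the constants match because $\frac{2^n}{\binom{2n}{n}\,n!}=\frac{2^n(n!)^2}{(2n)!\,n!}=\frac{2^n\,n!}{(2n)!}$. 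The main delicate step is the pointwise identification $B_{\hat\Phi}^*=\tfrac12(B_\Phi^*-B_\Phi^*)$ through support-function arithmetic; once this is in hand, the argument is a direct chain of three inequalities (symmetrised Besicovitch, path symmetrisation, Rogers--Shephard). The verification that $\hat\Phi$ is again a continuous reversible Finsler metric, so that Proposition~\ref{prop:besicovitch} applies verbatim, is immediate from the analogous property of $\Phi$.
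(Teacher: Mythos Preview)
Your proof is correct and follows essentially the same approach as the paper. The only cosmetic difference is that you symmetrise by the average $\hat\Phi=\tfrac12(\Phi+\Phi(-\cdot))$ while the paper uses the sum $\Psi=\Phi+\Phi(-\cdot)$; the scaling factors $2^{-n}$ that appear in your dual-ball and distance computations cancel exactly, yielding the same constant. Your support-function identification $B_{\hat\Phi}^*=\tfrac12(B_\Phi^*-B_\Phi^*)$ is in fact spelled out more carefully than in the paper.
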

\begin{proof}
Following the proof of theorem 4.13 of \cite{Bal}, we consider the symmetrized Finsler metric $\Psi$ defined by
\begin{equation*}
\forall u\in TP,\quad \Psi(u):= \Phi(u)+\Phi(-u)
\end{equation*}
so that, for all curve $\gamma$,
\begin{equation*}
\quad \longueur_{\Psi}(\gamma) = \longueur_{\Phi}(\gamma) + \longueur_{\Phi}(\check{\gamma}),
\end{equation*}
where $\check{\gamma}$ designates the time-reversed curve. Thus, for all $x,y\in P$, $d_{\Psi}(x,y)\geq d_{\Phi}(x,y)+d_{\Phi}(y,x)$, hence
\begin{equation*}
\forall i,\quad  d_{\Psi}(F_{i},G_{i})\geq d_{\Phi}(F_{i},G_{i})+d_{\Phi}(G_{i},F_{i}).
\end{equation*}

On the other hand, at every $p\in P$, $B_{\Psi_{p}}=B_{\Phi_{p}}-B_{\Phi_{p}}$, thus, applying the Rogers-Shepard inequality at every cotangent space we have that
\begin{equation*}
v_{HT}(P,\Phi) \geq \frac{(n!)^{2}}{(2n)!}v_{HT}(P,\Psi).
\end{equation*}
The inequality then follows from proposition \ref{prop:besicovitch} applied to $(P,\Psi)$.
\end{proof}

\begin{rmq}
We can't hope such an inequality for the Busemann-Hausdorff volume in the asymmetric case. Here $d_{i}$ will designate $\min(d(F_{i},G_{i}),d(G_{i},F_{i}))$. To see it in $\R^{2}$, let take $P = [0,1]^2$ and let define an asymmetric norm $\Phi$ on $\R^{2}$ by its unit ball $B_{\Phi}$. Let $a=\left(-\frac{1}{2},0\right)$, $b=a+h\left(\frac{2}{3},1\right)$ and $c=a+h\left(\frac{2}{3},-1\right)$ where $h>\frac{3}{2}$; we define $B_{\Phi}$ as the triangle $abc$. As $h$ tends to infinity, $d_{1} \sim \frac{1}{h}$, $d_{2} = 1$ and $v_{BH}(P,\Phi) = \frac{3\pi}{2h^{2}}$, thus
\begin{equation*}
\frac{v_{BH}(P,\Phi)}{d_{1}d_{2}} \underset{h\to+\infty}{\longrightarrow} 0.
\end{equation*}

However, in the asymmetric flat case, we still have the weaker (sharp) inequality:
\begin{equation} \label{eq:flatBH}
v_{BH}(P,\Phi) \geq \frac{b_{n}}{2^{n}}\left(\min_{1\leq i\leq n}d_{i}\right)^{n}.
\end{equation}
As a matter of fact, taking $P=[0,1]^{n}$ without loss of generality,
\begin{equation*}
d :=\min_{1\leq i\leq n}d_{i} = \inf \{ \alpha>0,\ \alpha B_{\Phi}\cap\partial [-1,1]^{n} \neq \emptyset \},
\end{equation*}
thus for all $\alpha < d$, $\alpha B_{\Phi} \subset [-1,1]^{n}$, so $|dB_{\Phi}| \leq 2^{n}$ (where $|\cdot |$ designates the standard Lebesgue measure of $\R^{n}$) which is equivalent to (\ref{eq:flatBH}).

We can also show, with some duality, the Holmes-Thompson analogous of this last inequality: for all flat metric
\begin{equation*}
v_{HT}(P,\Phi) \geq \frac{2^{n}}{n!b_{n}}\left(\min_{1\leq i\leq n}d_{i}\right)^{n}.
\end{equation*}
As a matter of fact, with the last notations, for all $\alpha > 0$,
\[
\alpha B_{\Phi}\cap\partial [-1,1]^{n} \neq \emptyset \Leftrightarrow \exists i,\ \alpha B_{\Phi}\cap\{x,\ \la e_{i},x\ra =\pm 1 \}\neq \emptyset
\]
\[
\Leftrightarrow \exists i,\ e_{i}\not\in (\alpha B_{\Phi})^{*}\ \text{ or } -e_{i}\not\in (\alpha B_{\Phi})^{*}
\]
where the $e_{j}$ are the canonical base of $\R^{n}$. Thus for all $\alpha < d$, $(\alpha B_{\Phi})^{*} \supset B_{\|\cdot\|_{1}}$ the convex hull of the $\pm e_{j}$, so $|(dB_{\Phi})^{*}|\geq \frac{2^{n}}{n!}$.
\end{rmq}

\subsection{Finsler Papasoglu's lemma}

We can now extend the original proof of Papasoglu to Finsler 2-spheres.

\begin{prop}
Let $(\mathbb{S}^{2},\Phi)$ be a reversible Finsler two-sphere and let $\mathcal{A}$ be any Finsler volume and $c>0$ such as $\mathcal{A}([0,d_{1}]\times [0,d_{2}],\|\cdot\|_{\infty}) = cd_{1}d_{2}$ for all $d_{i}>0$. Then for any $\varepsilon>0$ there exists a closed curve $\gamma$ dividing $(\mathbb{S}^{2},\Phi)$ into two disks $D_{1}$ and $D_{2}$ of area at least $\frac{\mathcal{A}(\mathbb{S}^{2},\Phi)}{4}$ and whose length satisfies
\begin{equation}\label{eq:fpap}
\longueur (\gamma ) \leq 2\sqrt{\frac{3}{c}\mathcal{A}(\mathbb{S}^{2},\Phi)} + \varepsilon.
\end{equation}
\end{prop}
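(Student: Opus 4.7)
The plan is to adapt Papasoglu's original argument, substituting the Finsler Besicovitch inequality (Proposition \ref{prop:besicovitch}) for its classical Euclidean counterpart. Let $\Gamma$ denote the set of simple closed curves dividing $(\mathbb{S}^{2},\Phi)$ into two disks of area at least $\mathcal{A}(\mathbb{S}^{2},\Phi)/4$, and set $L=\inf_{\gamma\in\Gamma}\longueur(\gamma)$. Following the argument at the start of Proposition \ref{papRiem}, for a suitable auxiliary $\varepsilon'>0$ (to be chosen small in terms of $\varepsilon$), one picks $\gamma\in\Gamma$ with $\longueur(\gamma)<L+\varepsilon'$ bounding disks $D_{1},D_{2}$ with $\mathcal{A}(D_{1})\geq\mathcal{A}(D_{2})$ (so $\mathcal{A}(D_{1})\leq\tfrac{3}{4}\mathcal{A}(\mathbb{S}^{2},\Phi)$), and such that no $\varepsilon'$-shortcut of $\gamma$ exists in $D_{1}$.

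Next, I mark four points $A_{1},A_{2},A_{3},A_{4}$ on $\gamma$ in cyclic order, separated by arcs of equal length $\longueur(\gamma)/4$, and label these arcs $F_{1}=A_{1}A_{2}$, $F_{2}=A_{2}A_{3}$, $G_{1}=A_{3}A_{4}$, $G_{2}=A_{4}A_{1}$, so that $(F_{i},G_{i})_{i=1,2}$ form two pairs of ``opposite sides''. The disk $D_{1}$ then inherits the structure of a topological square, with an obvious homeomorphism onto $[0,1]^{2}$ sending each arc to the corresponding side. For any $x\in F_{i}$ and $y\in G_{i}$, each of the two $\gamma$-arcs joining them must entirely contain one of the two remaining arcs $F_{3-i}$ or $G_{3-i}$, and hence has length at least $\longueur(\gamma)/4$; the no-shortcut property then yields $d_{\Phi}(x,y)\geq\longueur(\gamma)/4-\varepsilon'$. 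Taking the infimum, $d_{i}:=d_{\Phi}(F_{i},G_{i})\geq\longueur(\gamma)/4-\varepsilon'$.

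The proof of Proposition \ref{prop:besicovitch} is purely topological: it uses only the short map $f(x)=(d_{\Phi}(x,F_{i}))_{i}$, a boundary-preserving homeomorphism onto the model cube, and a homotopy argument in $\R^{n}\setminus\overset{\circ}{C}$. It therefore applies verbatim to $(D_{1},\Phi)$ viewed as a topological square, yielding
\begin{equation*}
\mathcal{A}(D_{1},\Phi)\geq\mathcal{A}\bigl([0,d_{1}]\times[0,d_{2}],\|\cdot\|_{\infty}\bigr) = c\,d_{1}d_{2} \geq c\left(\frac{\longueur(\gamma)}{4}-\varepsilon'\right)^{2}.
\end{equation*}
Combined with $\mathcal{A}(D_{1})\leq\tfrac{3}{4}\mathcal{A}(\mathbb{S}^{2},\Phi)$ and $\longueur(\gamma)<L+\varepsilon'$, this gives $L\leq 2\sqrt{\tfrac{3}{c}\mathcal{A}(\mathbb{S}^{2},\Phi)}+O(\varepsilon')$, and taking $\varepsilon'$ small enough in terms of $\varepsilon$ concludes.

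The main obstacle is the justification that Proposition \ref{prop:besicovitch} applies to $D_{1}$, which is not literally a parallelotope in $\R^{n}$ but a topological square endowed with the ambient Finsler metric. Since the proof of that proposition uses only topological data, the extension is immediate but deserves to be made explicit. The only other thing to check is the routine bookkeeping of the $\varepsilon'$-errors (from the near-minimality of $\gamma$ and from the approximate no-shortcut bound), handled exactly as in Proposition \ref{papRiem}.
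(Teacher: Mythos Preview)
Your proposal is correct and follows essentially the same route as the paper: divide $\gamma$ into four equal arcs, use the no-shortcut property to bound the distances between opposite pairs by $\longueur(\gamma)/4-\varepsilon'$, apply the Finsler Besicovitch inequality to $D_{1}$, and combine with $\mathcal{A}(D_{1})\leq\tfrac{3}{4}\mathcal{A}(\mathbb{S}^{2},\Phi)$. Your write-up is in fact more careful than the paper's on two points it leaves implicit: the verification that Proposition~\ref{prop:besicovitch} applies to the topological square $D_{1}$ (with its intrinsic Finsler distance), and the explicit derivation of the lower bound on $d_{i}$ from the no-shortcut property.
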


\begin{proof}
Lets having the same approach as the Riemannian proof, taking $\gamma\in \Gamma$ a curve of length $\longueur (\gamma)<L+\varepsilon$ dividing $\mathbb{S}^{2}$ on two disk $D_{1}$ and $D_{2}$ with the same conditions.

Let divide $\gamma$ on 4 curves $\gamma = \alpha_{1}\cup\alpha_{2}\cup\alpha_{3}\cup\alpha_{4}$ of the same length $\frac{\longueur(\gamma)}{4}$. As there is no $\varepsilon$-shortcut, we have got that
\begin{equation*}
d_{\Phi}(\alpha_{1},\alpha_{3}),d_{\Phi}(\alpha_{2},\alpha_{4}) \geq \frac{\longueur(\gamma)}{4} -\varepsilon.
\end{equation*}
Hence, by Besicovitch's lemma and the example \ref{ex:besicovitch},
\begin{equation*}
\mathcal{A} (D_{1}) \geq c\frac{(\longueur(\gamma)-4\varepsilon)^{2}}{16}
\end{equation*}
But $\mathcal{A} (D_{1})\leq \frac{3}{4}\mathcal{A} (\mathbb{S}^{2},\Phi)$, thus
\begin{equation*}
\longueur (\gamma ) \leq 2\sqrt{\frac{3}{c}\mathcal{A}(\mathbb{S}^{2},\Phi)} + 4\varepsilon.
\end{equation*}

\begin{figure}
\begin{center}
\includegraphics[scale =0.8]{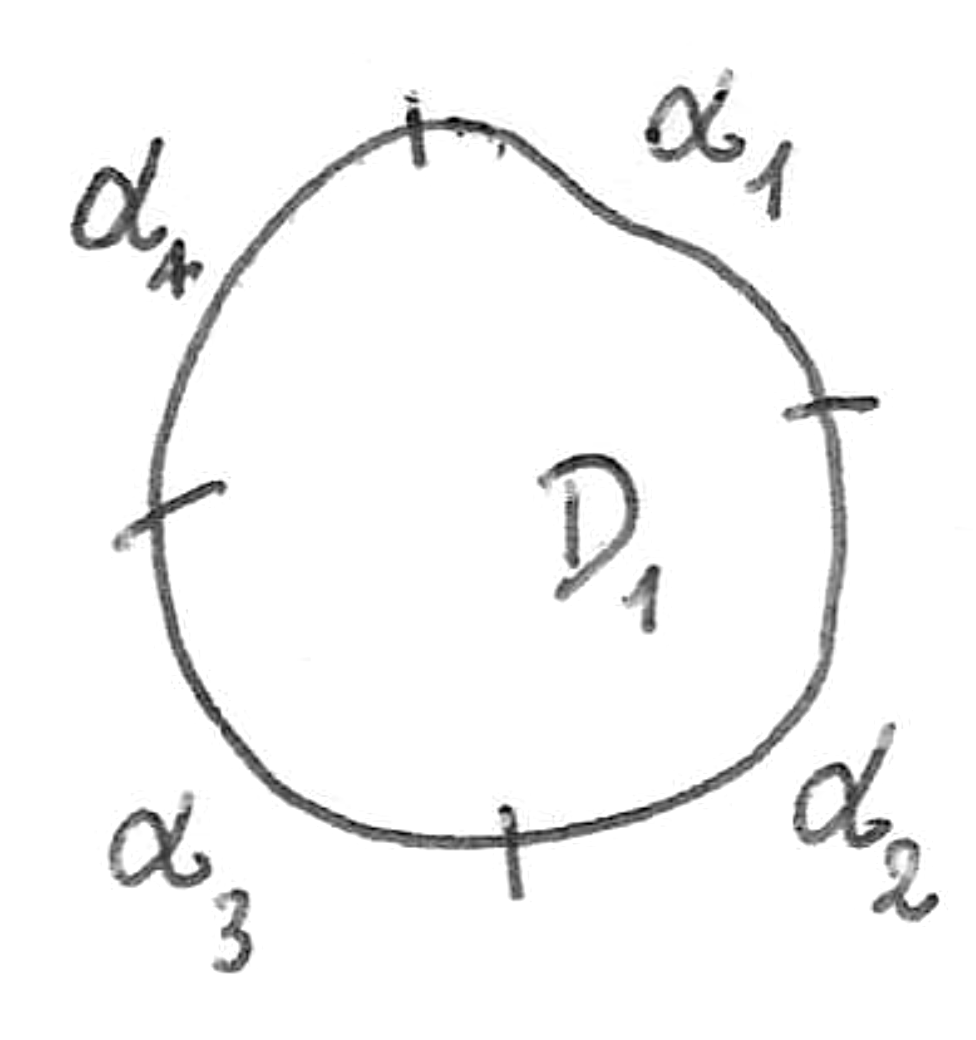}
\caption{Cutting $\gamma = \partial D_{1}$ in 4 curves of the same length}
\label{fig:papasoglu} 
\end{center}
\end{figure}
\end{proof}

\begin{exs}\ 
\begin{itemize}
\item For $\mathcal{A}=v_{BH}$, $c=\frac{\pi}{4}$ and (\ref{eq:fpap}) becomes:
\begin{equation*}
\longueur (\gamma ) \leq 4\sqrt{\frac{3}{\pi}\mathcal{A}(\mathbb{S}^{2},\Phi)} + \varepsilon.
\end{equation*}

\item For $\mathcal{A}=v_{HT}$, $c=\frac{2}{\pi}$ and (\ref{eq:fpap}) becomes:
\begin{equation*}
\longueur (\gamma ) \leq \sqrt{6\pi\mathcal{A}(\mathbb{S}^{2},\Phi)} + \varepsilon.
\end{equation*}

\end{itemize}
\end{exs}

Nevertheless, the proof of proposition \ref{papRiem2} gives a better estimate in these two special cases:
\begin{prop}
Let $(\mathbb{S}^{2},\Phi)$ be a reversible Finsler two-sphere, then  for $\mathcal{A}=v_{HT}$ or $v_{BH}$, there exists $\gamma$ such that,
\begin{equation*}
\longueur (\gamma ) \leq \sqrt{\frac{3\pi}{2}\mathcal{A}(\mathbb{S}^{2},g)} + \varepsilon,
\end{equation*}
with the same hypothesis on $\gamma$ and $\varepsilon$ as in the previous proposition.
\end{prop}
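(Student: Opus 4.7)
The plan is to mimic the proof of Proposition \ref{papRiem2} verbatim, replacing the appeal to the classical Pu inequality by its Finsler counterpart for the chosen volume. Concretely, I would fix $\varepsilon > 0$, choose $\gamma \in \Gamma$ with $\longueur(\gamma) < L + \varepsilon$, obtain disks $D_1, D_2$ with $\mathcal{A}(D_2) \leq \mathcal{A}(D_1) \leq \tfrac{3}{4}\mathcal{A}(\mathbb{S}^2, \Phi)$, and observe, exactly as before, that the absence of $\varepsilon$-shortcuts in $D_1$ implies that any path between two antipodal points of $\partial D_1$ has $\Phi$-length at least $\tfrac{\longueur(\gamma)}{2} - \varepsilon$. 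Identifying antipodal boundary points then produces a reversible Finsler projective plane $(\mathbb{RP}^2, \overline{\Phi})$ whose systole is at least $\tfrac{\longueur(\gamma)}{2} - \varepsilon$ and whose $\mathcal{A}$-area equals $\mathcal{A}(D_1)$.

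Next I would invoke a Finsler analogue of Pu's inequality: for every reversible continuous Finsler metric on $\mathbb{RP}^2$,
\begin{equation*}
\mathcal{A}(\mathbb{RP}^2, \overline{\Phi}) \geq \frac{2}{\pi}\,\mathrm{sys}(\mathbb{RP}^2, \overline{\Phi})^{2},
\end{equation*}
valid both for $\mathcal{A} = v_{HT}$ (by Ivanov's minimal filling theorem for Holmes--Thompson area, which recovers the sharp constant $2/\pi$) and for $\mathcal{A} = v_{BH}$ (the reversible Finsler Pu inequality for Busemann--Hausdorff area, established by the same circle of filling-area techniques). Applied to $(\mathbb{RP}^2, \overline{\Phi})$, this yields
\begin{equation*}
\mathcal{A}(D_1) \geq \frac{2}{\pi}\left(\frac{\longueur(\gamma)}{2} - \varepsilon\right)^{2}.
\end{equation*}

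Combining this with $\mathcal{A}(D_1) \leq \tfrac{3}{4}\mathcal{A}(\mathbb{S}^2, \Phi)$ and letting $\varepsilon \to 0$ then yields the announced bound $\longueur(\gamma) \leq \sqrt{\tfrac{3\pi}{2}\mathcal{A}(\mathbb{S}^2,\Phi)} + \varepsilon$.

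The only delicate step is the invocation of the Finsler Pu inequality with the sharp constant $2/\pi$ for both Holmes--Thompson and Busemann--Hausdorff volumes in the reversible setting; the geometric part of the argument (building a projective plane out of $D_1$ by antipodal identification and controlling its systole via the no-shortcut property) is identical to the Riemannian case and requires no modification beyond replacing the word ``Riemannian'' by ``reversible Finsler.'' Everything else is routine bookkeeping with $\varepsilon$.
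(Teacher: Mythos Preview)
Your proposal is correct and matches the paper's own proof essentially verbatim: the paper simply notes that by Ivanov's theorems (in \cite{Ivanov2011}) Pu's systolic inequality holds for reversible Finsler metrics with either $v_{HT}$ or $v_{BH}$, and therefore the proof of Proposition~\ref{papRiem2} carries over unchanged. The only refinement is that the paper gives a precise citation (Ivanov, Theorems~3 and~4) for the Finsler Pu inequality in both cases, whereas you attribute the $v_{BH}$ case more vaguely to ``the same circle of filling-area techniques.''
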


\begin{proof}
According to Ivanov's theorem 3 and 4 of \cite{Ivanov2011}, Pu's systolic inequality remains true for these two measures and any reversible Finsler metric $\Phi$. Thus, proof of proposition \ref{papRiem2} remains valid in this case.
\end{proof}

\begin{rmq}
the optimality issue discussed in remark \ref{optimal} still applies in the Busemann-Hausdorff case, according to Ivanov's theorem.
\end{rmq}

\bibliographystyle{alpha}
\bibliography{biblioM1} 

\begin{thebibliography}{APBT16}

\bibitem[APBT16]{Bal}
J.~C. \'Alvarez~Paiva, F.~Balacheff, and K.~Tzanev.
\newblock Isosystolic inequalities for optical hypersurfaces.
\newblock {\em Advances in Mathematics}, 301:934--972, 2016.

\bibitem[Bal15]{Bal2}
Florent Balacheff.
\newblock Measurements of riemannian two-disks and two-spheres.
\newblock {\em Pacific Journal of Mathematics}, 275(1):167--181, 2015.

\bibitem[BBI01]{BBI}
D.~Burago, Y.~Burago, and S.~Ivanov.
\newblock {\em A Course in Metric Geometry}.
\newblock American Mathematical Society, 2001.

\bibitem[Gro83]{fill}
Misha Gromov.
\newblock Filling riemannian manifolds.
\newblock {\em J. Differential Geometry}, 18:1--147, 1983.

\bibitem[Gro01]{Gr}
Misha Gromov.
\newblock {\em Metric Structures for Riemannian and Non-Riemannian Spaces}.
\newblock Modern Birkhäuser Classics, 2001.

\bibitem[Iva11]{Ivanov2011}
S.~V. Ivanov.
\newblock Filling minimality of finslerian 2-discs.
\newblock {\em Proceedings of the Steklov Institute of Mathematics},
  273(1):176--190, Jul 2011.

\bibitem[Lio14]{Lio2}
Yevgeny Liokumovich.
\newblock Slicing up a 2-sphere.
\newblock {\em Journal of Topology and Analysis}, 6(4):573--590, 2014.

\bibitem[LNR15]{Lio1}
Y.~Liokumovich, A.~Nabutovsky, and R.~Rotman.
\newblock Contracting the boundary of a riemannian 2-disc.
\newblock {\em Geometric and Functional Analysis}, 25(5):1543--1574, 2015.

\bibitem[Pap09]{Pa}
Panos Papasoglu.
\newblock Cheeger constants of surfaces \& isoperimetric inequalities.
\newblock {\em Transactions of the american mathematical society},
  361(10):5139--5162, 2009.

\end{thebibliography}

\textsc{S. Allais, École Normale Supérieure de Lyon, 69007 Lyon, France.}

\emph{E-mail address:} \texttt{simon.allais@ens-lyon.fr}
\end{document}